\def\@author#1{\g@addto@macro\elsauthors{\normalsize%
    \def\baselinestretch{1}%
    \upshape\authorsep#1\unskip\textsuperscript{%
      \ifx\@fnmark\@empty\else\unskip\sep\@fnmark\let\sep=,\fi
      \ifx\@corref\@empty\else\unskip\sep\@corref\let\sep=,\fi
      }%
    \def\authorsep{\unskip,\space}%
    \global\let\@fnmark\@empty
    \global\let\@corref\@empty  
    \global\let\sep\@empty}%
    \@eadauthor={#1}
}
\patchcmd{\ps@pprintTitle}{\footnotesize\itshape
       Preprint submitted to \ifx\@journal\@empty Elsevier
       \else\@journal\fi\hfill\today}{\relax}{}{}
\theoremstyle{plain}
\newtheorem{theorem}{Theorem}
\newtheorem{lemma}[theorem]{Lemma}
\newtheorem{proposition}[theorem]{Proposition}
\newtheorem{conjecture}[theorem]{Conjecture}
\newtheorem{remark}[theorem]{Remark}
\newproof{pop1}{Proof of Proposition \ref{diamonds}}
\newproof{pop2}{Proof of Proposition \ref{lemseidel}}
\newproof{pop3}{Proof of Proposition \ref{main2}}
\begin{document}    
    \title{An exact extremal result for tournaments and 4-uniform hypergraphs}
    
    \author[A]{Wiam Belkouche}
    \ead{belkouche.wiam@gmail.com}
    
    \author[A]{Abderrahim Boussa\"{\i}ri\corref{cor1}}  
    \cortext[cor1]{Corresponding author}
    \ead{aboussairi@hotmail.com}
    
    \author[A]{Soufiane Lakhlifi}
    \ead{s.lakhlifi1@gmail.com}
    
    \author[A]{Mohammed Zaidi}
    \ead{zaidi.fsac@gmail.com}


\address[A]{Facult\'e des Sciences A\"in Chock,  D\'epartement de Math\'ematiques et Informatique, Laboratoire de Topologie, Alg\`ebre, G\'eom\'etrie et Math\'ematiques Discr\`etes, Universit\'e Hassan II

Km 8 route d'El Jadida,
BP 5366 Maarif, Casablanca, Maroc}
        \begin{frontmatter}
        \begin{abstract}
                            In this paper, we address the following problem due to Frankl and F\"uredi (1984). What is the maximum number of hyperedges in an $r$-uniform hypergraph
            with $n$ vertices, such that every set of $r+1$ vertices contains $0$ or exactly
            $2$ hyperedges? They solved this problem for $r=3$. For $r=4$, a
            partial solution is given by Gunderson and Semeraro (2017) when $n=q+1$ for
            some prime power number $q\equiv3\pmod{4}  $. Assuming the existence
            of skew-symmetric conference matrices for every order divisible by $4$, we
            give a solution for $n\equiv0\pmod{4}  $ and for $n\equiv3\pmod{4}  $.
        \end{abstract}
        \begin{keyword}
            Tournaments, Uniform hypergraphs, Skew-conference matrices.
        \end{keyword}

    \end{frontmatter}
\section{Introduction}
One of the most important problems in extremal combinatorics is to determine
the largest or the smallest possible number of copies of a given object in a
finite combinatorial structure. In the first part of this work, we address
this problem in the case of tournaments. Throughout this paper, we mean by an 
\emph{$n$-tournament},  a tournament with $n$ vertices. It is easy to see that, up to
isomorphism, there are four distinct $4$-tournaments. The two that contain a
single $3$-cycle are called \emph{diamonds}. They consist of a vertex
dominating or dominated by a $3$-cycle. The class of tournaments without
diamonds was characterized by Moon \cite{moon79}. These tournaments appear in
the literature under the names \emph{local orders} \cite{cameron78},\emph{
locally transitive tournaments} \cite{lachlan84} or \emph{vortex-free
tournaments} \cite{Knuth92}. Curiously, there is little work on the number
$\delta_{T}$ of diamonds in an $n-$tournament. To our knowledge, the only
papers dealing with this problem are those of Bouchaala \cite{bouchaala2004}
and Bouchaala et al. \cite{bouchaala13}.  Recently, Bondy \cite{bondy08} asked
for the maximum number of diamonds in an $n$-tournament. To attack this
problem, we will use a relation between the number $\delta_{T}$\ of diamonds
in a tournament $T$ and the coefficients of its Seidel adjacency matrix.
Recall that the \emph{adjacency matrix} of a tournament $T$ with $n$ vertices
$v_{1},\ldots,v_{n}$ is the $n\times n$ matrix $A=(a_{i,j})$ in which
$a_{i,j}$ is $1$ if $v_{i}$ dominates $v_{j}$ and $0$ otherwise. The
\emph{Seidel adjacency matrix} of a tournament $T$ is $S=A-A^{T}$ where
$A^{T}$ is the transpose of $A$. Our first result is stated as follows.

\begin{theorem}
\label{main1}Let $T$ be an $n$-tournament and $S$ its Seidel
adjacency matrix. Then we have the following:

\begin{enumerate}
\item If $n$ is even, then $\delta_{T}\leq\frac{1}{96}n^{2}(n-1)(n-2)$.
Moreover, equality holds if and only if $n\equiv0\pmod{4}$ and $S$ is a
skew-conference matrix.

\item If $n$ is odd, then $\delta_{T}\leq\frac{1}{96}n\left(  n-1\right)
\left(  n-3\right)  (n+1)$. Moreover, equality holds if and only if
$n\equiv3\pmod{4}$ and $S$ is obtained by  deleting a row and the corresponding
column from a skew-conference matrix.
\end{enumerate}
\end{theorem}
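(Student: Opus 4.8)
The plan is to convert the combinatorial quantity $\delta_{T}$ into a spectral invariant of $S$ and then optimize. The first step is to prove an exact identity of the form
\[
\delta_{T}=\tfrac{1}{32}\Bigl(8\tbinom{n}{4}+2n(n-1)(n-2)+n(n-1)-\operatorname{tr}(S^{4})\Bigr).
\]
To obtain it I would expand $\operatorname{tr}(S^{4})=\sum_{i,j,k,l}s_{ij}s_{jk}s_{kl}s_{li}$ and sort the closed $4$-walks by their coincidence pattern. The degenerate patterns ($i=k$, or $j=l$, or both) each contribute $+1$, because $s_{ab}s_{ba}=-1$, producing the terms $2n(n-1)(n-2)+n(n-1)$. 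The genuine $4$-cycles group according to the underlying $4$-set $Q$, and a short check of the four isomorphism types of $4$-tournament shows that the sum of the three cycle products supported on $Q$ equals $+1$ when $Q$ is transitive, $-3$ when $Q$ is a diamond, and $+1$ when $Q$ carries two $3$-cycles. Since a $4$-set contains $0$, $1$, or $2$ cyclic triangles, summing over $Q$ yields $8(\tbinom{n}{4}-4\delta_{T})$ for the nondegenerate part, which rearranges to the displayed identity. Consequently, maximizing $\delta_{T}$ is exactly the same as minimizing $\operatorname{tr}(S^{4})$.

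Next I would bound $\operatorname{tr}(S^{4})$ spectrally. Writing $t_{i}=|\lambda_{i}|^{2}\ge 0$ for the eigenvalues of the real skew-symmetric matrix $S$, one has $\sum_{i}t_{i}=-\operatorname{tr}(S^{2})=n(n-1)$ and $\operatorname{tr}(S^{4})=\sum_{i}t_{i}^{2}$. Cauchy--Schwarz then gives $\operatorname{tr}(S^{4})\ge (n(n-1))^{2}/\operatorname{rank}(S)$, with equality iff all nonzero $t_{i}$ coincide and $\operatorname{rank}(S)$ is as large as possible. Since a real skew-symmetric matrix has even rank, $\operatorname{rank}(S)\le n$ for even $n$ and $\operatorname{rank}(S)\le n-1$ for odd $n$; feeding these into the Cauchy--Schwarz bound and then into the identity reproduces exactly the two claimed upper bounds $\tfrac{1}{96}n^{2}(n-1)(n-2)$ and $\tfrac{1}{96}n(n-1)(n-3)(n+1)$.

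It remains to analyze equality. In the even case equality forces $\operatorname{rank}(S)=n$ and all $t_{i}=n-1$, that is $-S^{2}=(n-1)I$; together with the skew-symmetric $0/\pm1$ shape this says precisely that $S$ is a skew-conference matrix, which exists only for $n\equiv0\pmod 4$. In the odd case equality forces $\operatorname{rank}(S)=n-1$ and every nonzero $t_{i}$ equal to $n$, so $S^{2}=-nI+n\,uu^{\mathsf T}$ where $u$ spans the one-dimensional kernel. The key observation is that the diagonal identity $(S^{2})_{ii}=-(n-1)$ together with $(S^{2})_{ii}=-n+n u_{i}^{2}$ forces $u_{i}^{2}$ to be constant, so the kernel is spanned by a $\pm1$ vector $b$. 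I would then border $S$ to
\[
\widetilde{S}=\begin{pmatrix}0 & b^{\mathsf T}\\ -b & S\end{pmatrix},
\]
and a direct computation using $Sb=0$ and $bb^{\mathsf T}=n\,uu^{\mathsf T}$ gives $\widetilde{S}^{2}=-nI_{n+1}$, so $\widetilde{S}$ is a skew-conference matrix of order $n+1$ and $S$ is recovered from it by deleting a row and the matching column. As skew-conference matrices have order divisible by $4$, this forces $n\equiv3\pmod 4$; the converse direction is the same computation read backwards.

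I expect the main obstacle to be the equality analysis in the odd case: everything hinges on recognizing that the extremal spectral profile makes the kernel direction a genuine $\pm1$ vector, which is exactly what lets the bordering reconstruct an honest skew-conference matrix rather than merely a matrix with the correct eigenvalues. Establishing the opening identity is routine but bookkeeping-heavy, and one must take care that the degenerate closed $4$-walks are tallied correctly.
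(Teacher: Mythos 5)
Your proposal is correct --- I checked the opening identity (it is equivalent to the paper's Lemma~\ref{diam principa minors} via $\operatorname{tr}(S^{4})=2\sigma_{2}^{2}-4\sigma_{4}$), both bounds, and the equality analysis --- and while its overall architecture (spectral reformulation, a classical mean inequality, then structural analysis of the extremal $S$) parallels the paper, the technical ingredients differ at every step, most substantially at the end. Where you count closed $4$-walks and classify cycle products over the four types of $4$-tournaments, the paper instead sums $4\times4$ principal minors, using that the Seidel determinant of a $4$-tournament is $9$ for a diamond and $1$ otherwise; these carry the same information. Where you apply Cauchy--Schwarz to the $t_{i}$ together with the parity of the rank of a real skew-symmetric matrix, the paper applies Maclaurin's inequality (the $k=2$ case, which is the same inequality in disguise) to the $\lambda_{i}^{2}$, but to control the number of nonzero eigenvalues it must invoke a cited result of McCarthy--Benjamin on the nullity of odd-order Seidel matrices; your rank-parity argument is more elementary and gets the multiplicity of $0$ for free from the equality condition. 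The genuine divergence is in the odd equality case: the paper quotes a lemma of Greaves--Suda as a black box to produce the $\pm1$ bordering vector, and then needs Cauchy interlacing for the converse, whereas you prove the bordering statement from scratch --- $S^{2}=-nI+nuu^{\mathsf T}$, the diagonal entries $(S^{2})_{ii}=-(n-1)$ force $u_{i}^{2}=1/n$, so $\ker S$ is spanned by a $\pm1$ vector and the block computation gives $\widetilde{S}^{2}=-nI_{n+1}$ --- and your converse simply reads the same computation backwards. So your proof is self-contained precisely where the paper's is not; the only shared unproved input is the classical fact that a skew-conference matrix of order greater than $2$ has order divisible by $4$ (via $S+I$ being a skew-Hadamard matrix), which the paper also uses implicitly, and both treatments silently ignore the degenerate orders $n\leq2$.
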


Bondy's problem is related to the following particular case of a
problem raised by Frankl and F\"{u}redi \cite{frankl84}. What is the maximum
number of hyperedges of an $4$-uniform hypergraph in which every $5$ vertices
contains $0$ or exactly $2$ hyperedges? We call such hypergraphs, $FF_{4}%
$\emph{-hypergraph}. To construct $FF_{4}$-hypergraphs, we can use Baber's
construction \cite{baber2015}. Baber associates with each tournament
$T=(V,A)$, the $4$-uniform hypergraph $\mathcal{H}_{T}$ on $V$ whose
hyperedges correspond to subsets of $V$ which induce a diamond in $T$. This
hypergraph is an $FF_{4}$-hypergraph because every $5$-tournament contains $0$
or $2$ diamonds. Gunderson and Semeraro \cite{semeraro17} showed that an $FF_{4}%
$-hypergraph with $n$ vertices has at most $\frac{1}{96}n^{2}(n-1)(n-2)$
hyperedges. Moreover, they proved that this bound is reached if $n=q+1$ for
some prime power number $q\equiv3\pmod{4}  $. To prove this, they
considered the $FF_{4}$-hypergraph $\mathcal{H}_{T^{\ast}(q)}$ where $T^{\ast
}(q)$ is the tournament obtained form the Paley tournament $T(q)$ on $q$
vertices by adding a new vertex which dominates every vertex of $T(q)$. The
fact that the $FF_{4}$-hypergraph\emph{ }$\mathcal{H}_{T^{\ast}(q)}$ has
exactly $\frac{1}{96}n^{2}(n-1)(n-2)$ hyperedges follows from Theorem 11 and
19 of \cite{semeraro17}. The Seidel adjacency matrix of $T^{\ast}(q)$ is a
skew-conference matrix. This is obtained via Paley construction \cite{paley33}.

The second part of this paper is devoted to Frankl-F\"{u}redi problem. As we
have mentioned above, a partial solution of this problem is obtained in
\cite{semeraro17}. More precisely, the following result is a particular 
case of \cite[Proposition~14]{semeraro17}.

\begin{proposition}
\label{sem1}The number $e(\mathcal{H})$ of hyperedges in an $FF_{4}%
$-hypergraph $\mathcal{H}$ with $n$ vertices is at most $\frac{1}{96}%
n^{2}(n-1)(n-2)$. Moreover, if $n\equiv0\pmod{4}$, the equality holds if
and only if $\mathcal{H}$ is such that every set of $3$ vertices occurs in
exactly $\frac{n}{4}$ hyperedges.
\end{proposition}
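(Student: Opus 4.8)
The plan is to run a double-counting argument on (triple, hyperedge) incidences, convert it into two moment identities for the codegree sequence, and then read off the bound from Cauchy--Schwarz. For a $3$-element set $T\subseteq V$ write $d(T)$ for its \emph{codegree}, the number of hyperedges of $\mathcal{H}$ containing $T$; thus $e:=e(\mathcal{H})$ counts hyperedges and $\binom{n}{3}$ counts triples.

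First I would record two identities. Since each hyperedge (a $4$-set) contains exactly $\binom{4}{3}=4$ triples, counting incidences gives
\[
\sum_{T} d(T) = 4e,
\]
the sum ranging over all triples. For the second identity, I count pairs $\{e_1,e_2\}$ of distinct hyperedges with $|e_1\cap e_2|=3$. Two such hyperedges share a unique triple and their union is a $5$-set; conversely, any two distinct $4$-subsets of a $5$-set meet in exactly $3$ vertices. Hence these pairs are precisely the pairs of hyperedges lying inside a common $5$-set, and by the $FF_4$ condition each such $5$-set carries exactly $2$ hyperedges. Counting incidences (hyperedge, $5$-set containing it) in two ways shows that the number $N_2$ of $5$-sets carrying $2$ hyperedges satisfies $2N_2=(n-4)e$, while grouping the pairs by their shared triple gives $\sum_T\binom{d(T)}{2}=N_2$. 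Combining,
\[
\sum_{T}\binom{d(T)}{2}=\frac{(n-4)e}{2},
\qquad\text{hence}\qquad
\sum_{T} d(T)^2 = (n-4)e + 4e = ne.
\]

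With both moments in hand the bound is immediate from Cauchy--Schwarz: $\bigl(\sum_T d(T)\bigr)^2\le \binom{n}{3}\sum_T d(T)^2$ reads $16e^2\le \binom{n}{3}\,ne$, so for $e>0$ we obtain $e\le \frac{n}{16}\binom{n}{3}=\frac{1}{96}n^2(n-1)(n-2)$ (the case $e=0$ being trivial). Equality in Cauchy--Schwarz forces the codegree sequence to be constant, $d(T)\equiv 4e/\binom{n}{3}$; substituting the extremal value of $e$ yields $d(T)\equiv n/4$ for every triple $T$, which is exactly the stated condition and makes sense precisely because $n\equiv 0\pmod 4$. Conversely, if every triple has codegree $n/4$, then $\sum_T d(T)=\frac{n}{4}\binom{n}{3}=4e$ forces $e=\frac{1}{96}n^2(n-1)(n-2)$, so the condition is also sufficient.

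I do not expect a serious obstacle here; the only delicate points are bookkeeping. One must verify that two hyperedges inside a common $5$-set meet in exactly a triple, so that the pair-count is organized by shared triples with no double counting, and one must keep the $FF_4$ ``$0$ or $2$'' dichotomy in force so that every $5$-set contributes $0$ or $2$ to the incidence counts. The crux is the passage to the two moment identities $\sum_T d(T)=4e$ and $\sum_T d(T)^2=ne$; after that, the extremal analysis is a one-line application of the equality case of Cauchy--Schwarz together with the integrality of $n/4$.
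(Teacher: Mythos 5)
Your proof is correct. For comparison: the paper does not actually prove this proposition itself --- it quotes it as a special case of Proposition~14 of Gunderson and Semeraro \cite{semeraro17} --- but the underlying machinery does surface in the paper's proof of Proposition~\ref{main2}, and it differs from yours in one genuine respect. You count pairs of \emph{hyperedges} meeting in a triple; since the ``$0$ or $2$'' condition rules out $5$-sets containing exactly one hyperedge, this yields the exact second-moment identity $\sum_T d(T)^2 = ne$. Gunderson--Semeraro (and the paper, in the proof of Proposition~\ref{main2}) instead count pairs $(A,B)$ with $A\in\mathcal{H}$, $B\notin\mathcal{H}$ and $|A\cap B|=3$, which gives only the inequality $\sum_T d(T)^2 \le ne$; but that argument needs nothing more than the weaker hypothesis that every $5$-set contains \emph{at most} two hyperedges, which is why their theorem (stated for general $r$ at the start of Section~3 of the paper) covers a strictly larger class of hypergraphs than the $FF_4$ class. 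So the trade-off is: your variant exploits the full $FF_4$ dichotomy to obtain an identity and a slightly cleaner equality analysis, while the cited proof buys generality by working with the complementary pairs. Both arguments conclude identically via Cauchy--Schwarz, with the equality case forcing constant codegree $4e/\binom{n}{3} = n/4$, the integrality of $n/4$ accounting for the restriction to $n\equiv 0\pmod 4$.
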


For $n\equiv3\pmod{4}$, we obtain the following.

\begin{proposition}
\label{main2} The number $e(\mathcal{H})$ of hyperedges in an $FF_{4}%
$-hypergraph $\mathcal{H}$ with $n=4t+3$ vertices is at most $\frac{1}%
{96}n\left(  n-1\right)  \left(  n-3\right)  (n+1)$.
\end{proposition}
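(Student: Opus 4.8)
The plan is to control $e(\mathcal{H})$ through the codegrees of the triples. For a $3$-element set $X\subseteq V$, let $d(X)$ denote the number of hyperedges of $\mathcal{H}$ containing $X$. The whole argument rests on two double-counting identities, which should already be implicit in the proof of Proposition~\ref{sem1}:
\begin{equation}
\sum_{X} d(X) = 4\,e(\mathcal{H}), \qquad \sum_{X} d(X)^2 = n\,e(\mathcal{H}),
\end{equation}
the sums running over all $\binom{n}{3}$ triples $X$. Granting these, Cauchy--Schwarz gives $(4e)^2 = \bigl(\sum_X d(X)\bigr)^2 \le \binom{n}{3}\sum_X d(X)^2 = \binom{n}{3} n e$, which is exactly the general bound $e \le \frac{1}{96}n^2(n-1)(n-2)$, with equality precisely when all codegrees equal $n/4$. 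Since $n/4$ is not an integer when $n\equiv 3\pmod 4$, the heart of the matter is to replace Cauchy--Schwarz by an estimate that respects the integrality of the $d(X)$.

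The first identity is immediate, since every hyperedge is a $4$-set containing exactly $\binom{4}{3}=4$ triples. The second is where the $FF_4$ condition enters, and I expect this to be the main step. First I would count unordered pairs of hyperedges sharing a common triple: the triple $X$ lies in $\binom{d(X)}{2}$ such pairs, and because two distinct $4$-sets meet in at most $3$ vertices, the shared triple of such a pair is unique, so $\sum_X \binom{d(X)}{2}$ is the number of pairs of hyperedges whose union has size $5$. By the defining property every $5$-set carries $0$ or $2$ hyperedges, so these pairs are in bijection with the $5$-sets carrying exactly $2$ hyperedges. Counting incidences between hyperedges and the $(n-4)$ five-sets containing each of them shows that this number is $\frac{1}{2}(n-4)e$. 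Hence $\sum_X d(X)^2 - \sum_X d(X) = (n-4)e$, which together with the first identity yields $\sum_X d(X)^2 = ne$.

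With both identities secured, I would exploit integrality directly. Writing $n=4t+3$, every integer $d$ satisfies $(d-t)(d-t-1)\ge 0$, since no integer lies strictly between the consecutive integers $t$ and $t+1$. Summing over all triples,
\begin{equation}
\sum_{X}\bigl(d(X)-t\bigr)\bigl(d(X)-t-1\bigr) \ge 0,
\end{equation}
which expands to $\sum_X d(X)^2 - (2t+1)\sum_X d(X) + t(t+1)\binom{n}{3} \ge 0$. Substituting the two identities and $2t+1=\tfrac{n-1}{2}$ collapses this to $(n-2)\,e \le t(t+1)\binom{n}{3}$. Inserting $t(t+1)=\tfrac{(n-3)(n+1)}{16}$ and $\binom{n}{3}=\tfrac{n(n-1)(n-2)}{6}$ and cancelling the factor $n-2$ gives
\begin{equation}
e(\mathcal{H}) \le \frac{1}{96}\,n(n-1)(n-3)(n+1),
\end{equation}
as claimed. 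The one genuinely delicate point is the quadratic identity $\sum_X d(X)^2 = ne$; once it is in place, the integrality inequality does all the remaining work, and its equality case $d(X)\in\{t,t+1\}$ for every triple is consistent with the extremal configuration from a punctured skew-conference matrix in Theorem~\ref{main1}.
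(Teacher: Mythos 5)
Your proof is correct, and while it rests on the same two quantities as the paper's own proof --- the first and second moments of the triple codegrees, combined with integrality --- the execution is genuinely different and leaner. The paper never establishes your exact identity $\sum_X d(X)^2 = n\,e(\mathcal{H})$: it only derives the upper bound $\sum_i a_i^2 \le n\,e(\mathcal{H})$, and does so indirectly, through the Gunderson--Semeraro count of pairs $(A,B)$ with $A$ a hyperedge, $B$ a non-hyperedge, and $\lvert A\cap B\rvert = 3$ (the sets $\mathcal{F}$ and $\mathcal{F}_i$ in the paper). For the opposing lower bound on $\sum_i a_i^2$ the paper invokes a separate extremal lemma (Lemma \ref{min}), whose application requires first showing $k\le t$ via Proposition \ref{sem1} and then splitting into the cases $k\le t-1$ and $k=t$. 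Your route compresses all of this: counting unordered pairs of hyperedges sharing a triple (equivalently, $5$-sets carrying exactly two hyperedges) gives the second moment exactly, and the pointwise inequality $\sum_X \bigl(d(X)-t\bigr)\bigl(d(X)-t-1\bigr)\ge 0$ replaces Lemma \ref{min}, the case analysis, and the appeal to Proposition \ref{sem1} in one stroke --- indeed, as you note, your identity re-proves Proposition \ref{sem1} by Cauchy--Schwarz as a byproduct. What the paper's version buys is economy of exposition given its references (it reuses the counting from Proposition~14 of Gunderson--Semeraro and a known minimization lemma rather than re-deriving anything); what yours buys is a fully self-contained argument and immediate access to the equality case, namely that equality forces every codegree to lie in $\{t,t+1\}$.
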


In the last section, we discuss the existence of $FF_{4}$-hypergraphs for which
the equality holds in Propositions \ref{main1} and \ref{main2}.

\section{Maximum number of diamonds in tournaments}

In this section, we will prove our first main result. We start with the
following lemma, which gives a relation between the number of diamonds in an
$n$-tournament and the sum of principal minors of order $4$ of its associated
Seidel adjacency matrix.

\begin{lemma}
\label{diam principa minors}Let $T$ be an $n$-tournament with $n$ vertices and
$S$ its Seidel adjacency matrix. Then the sum of all $4\times4$ principal
minors of $S$ is equal to $8\cdot\delta_{T}+\binom{n}{4}$.
\end{lemma}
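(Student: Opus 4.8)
The plan is to interpret each $4\times4$ principal minor as the determinant of the Seidel matrix of an induced $4$-tournament and then to evaluate these determinants by isomorphism type. Writing $S=(s_{ij})$, the definition $S=A-A^{T}$ makes $S$ skew-symmetric with $s_{ij}\in\{+1,-1\}$ for $i\neq j$ and zero diagonal. For a $4$-element subset $X=\{i,j,k,l\}$ of vertices, the principal submatrix $S[X]$ is precisely the Seidel matrix of the subtournament $T[X]$ induced on $X$. Hence the sum of all $4\times4$ principal minors of $S$ equals $\sum_{|X|=4}\det\bigl(S[X]\bigr)$, the sum ranging over the $\binom{n}{4}$ four-element subsets of vertices.

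First I would show that each such determinant lies in $\{1,9\}$ and depends only on the isomorphism type of $T[X]$. For a $4\times4$ skew-symmetric matrix the determinant is the square of the Pfaffian $\operatorname{Pf}\bigl(S[X]\bigr)=s_{ij}s_{kl}-s_{ik}s_{jl}+s_{il}s_{jk}$ (with $i<j<k<l$), a sum of three terms each equal to $\pm1$; being a sum of three odd numbers it is an odd integer of absolute value $1$ or $3$, so $\det\bigl(S[X]\bigr)\in\{1,9\}$. Moreover, relabelling the vertices of $X$ conjugates $S[X]$ by a permutation matrix, and reversing all arcs sends $S[X]$ to $-S[X]$; since both operations preserve the determinant, $\det\bigl(S[X]\bigr)$ is an invariant of the isomorphism type of $T[X]$.

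It then suffices to evaluate the Pfaffian on one representative of each of the four $4$-tournaments. Carrying out this short computation, the transitive tournament and the unique strongly connected $4$-tournament (score sequence $(1,1,2,2)$) give $|\operatorname{Pf}|=1$, hence determinant $1$, while each of the two diamonds gives $|\operatorname{Pf}|=3$, hence determinant $9$. The main point to get right is exactly this dichotomy: the value $9$ is attained precisely on the diamonds and on no other type. Granting it, every diamond $4$-subset contributes $9$ and every other $4$-subset contributes $1$, so
\[
\sum_{|X|=4}\det\bigl(S[X]\bigr)=9\,\delta_{T}+\Bigl(\tbinom{n}{4}-\delta_{T}\Bigr)=8\,\delta_{T}+\tbinom{n}{4},
\]
which is the asserted identity. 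I expect the only real obstacle to be the bookkeeping in the four-type computation; everything else is forced by skew-symmetry together with the Pfaffian identity.
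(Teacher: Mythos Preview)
Your argument is correct and follows essentially the same approach as the paper: identify each $4\times4$ principal minor with the Seidel determinant of an induced $4$-tournament, observe that this determinant equals $9$ for diamonds and $1$ otherwise, and sum over the $\binom{n}{4}$ subsets. The paper simply asserts the $\{1,9\}$ dichotomy by direct check, whereas you supply the extra justification via the Pfaffian, which is a nice touch but not a different method.
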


\begin{proof}
We can check that the determinant of the Seidel adjacency matrix of a
$4$-tournament is $9$ if it is a diamond and $1$ otherwise. Moreover, the
number of all $4\times4$ principal minors of $S$ is $\binom{n}{4}$. It follows
that the sum of all $4\times4$ principal minors of $S$ is equal to
$9\cdot\delta_{T}+(\binom{n}{4}-\delta_{T})=$ $8\cdot\delta_{T}+\binom{n}{4}$.
\end{proof}

Let $M$ be an $n\times n$ complex matrix and $P_{M}(x)=\det(xI-M)=x^{n}%
+\sigma_{1}x^{n-1}+\sigma_{2}x^{n-2}+\cdots+\sigma_{n-1}x+\sigma_{n}$ its
characteristic polynomial, then%

\begin{equation}
\sigma_{k}=(-1)^{k}\sum(\text{all }k\times k\text{ principal minors})\text{ }
\label{eq01}%
\end{equation}

Consider now the $n$ complex eigenvalues $\alpha_{1},\alpha_{2},\ldots
,\alpha_{n}$ of $M$, and denote by $s_{k}$ the $k^{th}$ symmetric function of
the eigenvalues $\alpha_{1},\alpha_{2},\ldots,\alpha_{n}$ of $M$, that is $s_k=\underset{1\leq i_{1}<i_{2}<\cdots<i_{k}\leq n}{\sum}\alpha_{i_{1}%
}\alpha_{i_{2}}\ldots \alpha_{i_{k}}$. Then, we have
\[
s_{k}=(-1)^{k}\sigma_{k}%
\]

In particular :

\begin{enumerate}
\item $s_{1}=trace(M)=\alpha_{1}+\alpha_{2}+\cdots+\alpha_{n}=-\sigma_{1}$;

\item $s_{n}=\det(M)=\alpha_{1}\alpha_{2}\ldots\alpha_{n}=(-1)^{n}\sigma_{n}$.
\end{enumerate}

When $M$ is a real skew-symmetric matrix, its nonzero eigenvalues are all
purely imaginary and come in complex conjugate pairs $\pm i\lambda_{1}%
,\ldots,\pm i\lambda_{k}$, where $\lambda_{1},\ldots,\lambda_{k}$ are real
positive numbers. Then the characteristic polynomial of $M$ has the form%

\[
P_{M}(x)=x^{l}(x^{2}+\lambda_{1}^{2})(x^{2}+\lambda_{2}^{2})\cdots
(x^{2}+\lambda_{k}^{2})
\]

where $l+2k=n$.

Assume now that $M$ is skew-symmetric and all its off-diagonal entries are
from the set $\{-1,1\}$. Such matrix is sometimes known as a \emph{
skew-symmetric Seidel matrix}. By using \cite[Proposition~1]{mccarthy96},
$\det(S)=0$ if and only if $n$ is odd. Then, if $n$ is even, $l=0$ and
\[
P_{M}(x)=(x^{2}+\lambda_{1}^{2})(x^{2}+\lambda_{2}^{2})\cdots(x^{2}%
+\lambda_{n/2}^{2})
\]
If $n$ is odd, then by using \cite[Proposition~1]{mccarthy96} again, any
$(n-1)\times(n-1)$-principal minor is nonzero and thus, the multiplicity of
the eigenvalue $0$ is $1$. It follows that%

\[
P_{M}(x)=x(x^{2}+\lambda_{1}^{2})(x^{2}+\lambda_{2}^{2})\cdots(x^{2}%
+\lambda_{(n-1)/2}^{2})
\]

To prove Theorem \ref{main1}, we need the following proposition.

\begin{proposition}
\label{diamonds} Let $S$ be a skew-symmetric Seidel matrix and let
$P_{S}(x)=x^{n}+\sigma_{2}x^{n-2}+\sigma_{4}x^{n-4}+\cdots+\sigma_{n-2}%
x^{2}+\sigma_{n}$ its characteristic polynomial. Then we have the following assertions:

\begin{enumerate}
\item $\sigma_{2}=\frac{n(n-1)}{2}$;

\item If $n$ is even, then $\sigma_{4}\leq\frac{1}{8}n(n-1)^{2}(n-2)$, with
equality if and only if $P_{S}(x)=(x^{2}+(n-1))^{\frac{n}{2}}$;

\item If $n$ is odd, then $\sigma_{4}\leq\frac{1}{8}n^{2}\left(  n-1\right)
\left(  n-3\right)  $, with equality if and only if $P_{S}(x)=x(x^{2}%
+n)^{\frac{n-1}{2}}$.
\end{enumerate}
\end{proposition}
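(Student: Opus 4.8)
The plan is to reduce everything to the two power sums $p_2=\operatorname{tr}(S^2)$ and $p_4=\operatorname{tr}(S^4)$ via Newton's identities, exploiting that $S$ is skew-symmetric. Writing $e_k$ for the $k$-th elementary symmetric function of the eigenvalues, the relation $s_k=(-1)^k\sigma_k$ from the preamble gives $\sigma_2=e_2$ and $\sigma_4=e_4$, while $e_1=\operatorname{tr}(S)=0$ and $e_3=0$ (the characteristic polynomial has no odd-degree terms). Newton's identities then collapse to $\sigma_2=e_2=-\tfrac12 p_2$ and $\sigma_4=e_4=\tfrac18 p_2^{2}-\tfrac14 p_4$.

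For assertion (1), I would compute $p_2$ directly from the entry structure: $\operatorname{tr}(S^2)=\sum_{i,j}S_{ij}S_{ji}=-\sum_{i,j}S_{ij}^{2}=-n(n-1)$, since the $n(n-1)$ off-diagonal entries are all $\pm1$ and the diagonal vanishes. This yields $\sigma_2=\tfrac12 n(n-1)$ at once.

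For assertions (2) and (3), substituting $p_2=-n(n-1)$ gives $\sigma_4=\tfrac18 n^{2}(n-1)^{2}-\tfrac14\operatorname{tr}(S^4)$, so maximizing $\sigma_4$ is exactly minimizing $\operatorname{tr}(S^4)$. Using the spectral description recorded before the proposition — eigenvalues $\pm i\lambda_j$, plus a single $0$ when $n$ is odd — one has $\operatorname{tr}(S^2)=-2\sum_j\lambda_j^{2}$ and $\operatorname{tr}(S^4)=2\sum_j\lambda_j^{4}$, where the number of conjugate pairs is $m=n/2$ for even $n$ and $m=(n-1)/2$ for odd $n$. The constraint $\operatorname{tr}(S^2)=-n(n-1)$ fixes $\sum_j\lambda_j^{2}=\tfrac12 n(n-1)$, and the power-mean (Cauchy--Schwarz) inequality $\sum_j\lambda_j^{4}\ge(\sum_j\lambda_j^{2})^{2}/m$ gives the lower bound on $\operatorname{tr}(S^4)$ — namely $n(n-1)^{2}$ for even $n$ and $n^{2}(n-1)$ for odd $n$ — with equality precisely when all $\lambda_j^{2}$ coincide. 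Plugging these back into the formula for $\sigma_4$ produces the two claimed upper bounds, and the equality case forces $\lambda_j^{2}=n-1$ for all $j$ (even case) or $\lambda_j^{2}=n$ for all $j$ (odd case), that is $P_S(x)=(x^{2}+(n-1))^{n/2}$ respectively $P_S(x)=x(x^{2}+n)^{(n-1)/2}$.

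There is no deep obstacle here; the one point to get right — and the genuine source of the even/odd dichotomy — is the count $m$ of conjugate pairs, which drops by one in the odd case because of the forced zero eigenvalue, and this single shift is what separates the two bounds. As a cross-check and a purely combinatorial alternative for the minimization step, I would note $\operatorname{tr}(S^4)=\sum_{i,j}(S^2)_{ij}^{2}$ with diagonal entries $(S^2)_{ii}=-(n-1)$ and off-diagonal entries $(S^2)_{ij}$ equal to sums of $n-2$ terms $\pm1$; these off-diagonal entries are integers congruent to $n\pmod 2$, so they may all vanish when $n$ is even but satisfy $|(S^2)_{ij}|\ge1$ when $n$ is odd. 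This reproduces exactly the lower bounds $n(n-1)^{2}$ and $n^{2}(n-1)$ and explains the parity gap transparently, with the spectral argument then pinning down the precise form of $P_S$ in the equality cases.
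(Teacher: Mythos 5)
Your proof is correct, and it reaches the paper's bounds by a related but differently packaged route, with one genuinely new ingredient. The paper reads $\sigma_2$ and $\sigma_4$ directly off the factored characteristic polynomial as the first two elementary symmetric functions of $\lambda_1^2,\ldots,\lambda_m^2$ and applies Maclaurin's inequality with $k=2$; you instead pass through Newton's identities, writing $\sigma_4=\tfrac18 p_2^2-\tfrac14 p_4$ with $p_k=\operatorname{tr}(S^k)$, and minimize $\operatorname{tr}(S^4)$ by Cauchy--Schwarz. The core inequality is the same in both cases: Maclaurin at $k=2$, namely $\sum_{i<j}\lambda_i^2\lambda_j^2\le\tfrac{m-1}{2m}(\sum_i\lambda_i^2)^2$, is exactly $\sum_i\lambda_i^4\ge\tfrac1m(\sum_i\lambda_i^2)^2$ after expanding the square, and both proofs rely on the same preamble facts (McCarthy's result fixing the multiplicity of the zero eigenvalue, hence the count $m$ of conjugate pairs) and on the same equality analysis (all $\lambda_j^2$ equal, forcing the value $n-1$ when $n$ is even and $n$ when $n$ is odd). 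What is genuinely different is your entrywise cross-check: from $\operatorname{tr}(S^4)=\sum_{i,j}(S^2)_{ij}^2$, with $(S^2)_{ii}=-(n-1)$ and off-diagonal entries of $S^2$ being sums of $n-2$ terms $\pm1$, hence of the same parity as $n$, you recover the lower bounds $n(n-1)^2$ (even case) and $n^2(n-1)$ (odd case) with no spectral theory at all, which exposes the even/odd dichotomy as a pure parity phenomenon; the paper has no analogue of this argument. The trade-off is that this combinatorial route yields only the inequality, so, as you note, the spectral description is still needed to turn the equality case into the stated factorization of $P_S(x)$, whereas the paper's Maclaurin argument delivers bound and equality case in one stroke. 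A minor side benefit of your formulation: it avoids two slips in the paper's own write-up (the equality condition there should read $\lambda_i^2=\sigma_2/m$ rather than $\sigma_2/2$, and the final sentence repeats the even-case value of $\sigma_4$ in the odd case), since your equality values are stated correctly.
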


The proof of this proposition is based on the well known Maclaurin's inequality.

\begin{lemma}
Let $a_{1},a_{2},...,a_{l}$ be positive real numbers, and for $k=1,2,...,l$
define the averages $S_{k}$ as follows:
\[
S_{k}=\frac{s_k}{\binom{l}{k}}%
\]
Then
\[
S_{1}\geq\sqrt{S_{2}}\geq\sqrt[3]{S_{3}}\geq\cdots\geq\sqrt[l]{S_{l}}%
\]
with equality if and only if $a_{1}=\cdots=a_{l}$.
\end{lemma}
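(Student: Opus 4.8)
The plan is to deduce the whole Maclaurin chain from Newton's inequalities $S_k^2\ge S_{k-1}S_{k+1}$, so the bulk of the work is proving the latter. Write $s_k=e_k(a_1,\dots,a_l)$ for the $k$-th elementary symmetric function (so $s_k=\binom{l}{k}S_k$), put $s_0=S_0=1$, and consider the polynomial
\[
P(x)=\prod_{i=1}^{l}(x+a_i)=\sum_{k=0}^{l}\binom{l}{k}S_k\,x^{l-k},
\]
whose roots $-a_1,\dots,-a_l$ are real and, since each $a_i>0$, strictly negative. I would first record that all $S_k>0$, which is immediate from positivity of the $a_i$.

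To obtain Newton's inequalities I would use two operations that preserve real-rootedness. Differentiation preserves it by Rolle's theorem, and moreover the roots of $P'$ lie in the interval spanned by the roots of $P$, hence remain strictly negative. Reversal, sending a degree-$m$ polynomial $Q$ to $x^{m}Q(1/x)$, inverts the (nonzero) roots and so again yields a real-rooted polynomial; this is where strict negativity of the roots matters, as it guarantees no root is lost at the origin. Starting from $P$, I would differentiate $l-k-1$ times to reach degree $k+1$, reverse, and differentiate $k-1$ more times, arriving at a real-rooted quadratic $Ax^2+Bx+C$ whose coefficients are positive constant multiples of $S_{k+1}$, $S_k$, $S_{k-1}$ respectively. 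Its nonnegative discriminant $B^2\ge 4AC$ rearranges, after the factorial and binomial constants cancel, precisely to $S_k^2\ge S_{k-1}S_{k+1}$. Verifying that these constants collapse to the clean normalized inequality is routine bookkeeping, but it is the technical heart of the argument.

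Granting Newton's inequalities, the Maclaurin chain is a short telescoping argument. Since all $S_k>0$, the inequality $S_k^2\ge S_{k-1}S_{k+1}$ rewrites as $r_k\ge r_{k+1}$ for the ratios $r_k:=S_k/S_{k-1}$, so $r_1\ge r_2\ge\cdots\ge r_l$. Telescoping yields $S_k=r_1r_2\cdots r_k$, whence $\sqrt[k]{S_k}=(r_1\cdots r_k)^{1/k}$ is the geometric mean of $r_1,\dots,r_k$. Then $\sqrt[k]{S_k}\ge\sqrt[k+1]{S_{k+1}}$ is equivalent, after raising to the power $k(k+1)$, to $r_1\cdots r_k\ge r_{k+1}^{\,k}$, which holds because each of $r_1,\dots,r_k$ is at least $r_{k+1}$. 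Chaining these gives $S_1\ge\sqrt{S_2}\ge\cdots\ge\sqrt[l]{S_l}$.

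Finally, for the equality condition, the easy direction is clear: if $a_1=\cdots=a_l=a$ then $S_k=a^k$ and every $\sqrt[k]{S_k}$ equals $a$. Conversely, equality throughout the chain forces in particular $S_1=\sqrt[l]{S_l}$, which is exactly the statement that the arithmetic mean $S_1=\frac1l\sum_i a_i$ equals the geometric mean $\sqrt[l]{S_l}=\sqrt[l]{a_1\cdots a_l}$; by the AM--GM inequality this can happen only when all the $a_i$ coincide. The main obstacle is thus concentrated in the Newton step, specifically in confirming that the repeated differentiation--reversal procedure stays within real-rooted polynomials with nonzero roots, and that the resulting quadratic's discriminant reproduces the normalized inequality $S_k^2\ge S_{k-1}S_{k+1}$ exactly.
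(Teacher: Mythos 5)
Your proof is correct, but there is nothing in the paper to measure it against: the authors state Maclaurin's inequality as ``well known'' and use it as a black box, giving no proof at all, so your argument is a genuine addition rather than an alternative. The route you take is the classical one, and the step you flag as the technical heart does check out: starting from $P(x)=\prod_{i=1}^{l}(x+a_i)=\sum_{j=0}^{l}\binom{l}{j}S_j x^{l-j}$, differentiating $l-k-1$ times, reversing, and differentiating $k-1$ more times produces the quadratic $\tfrac{l!}{2}S_{k+1}x^{2}+l!\,S_k x+\tfrac{l!}{2}S_{k-1}$, so the discriminant condition is exactly $(l!)^{2}S_k^{2}\geq 4\cdot\tfrac{l!}{2}\cdot\tfrac{l!}{2}S_{k-1}S_{k+1}$, i.e.\ $S_k^{2}\geq S_{k-1}S_{k+1}$; and your care about strict negativity of the roots (Rolle keeps the roots of derivatives in the convex hull $[-\max a_i,-\min a_i]$, so reversal never loses a root at the origin) is precisely the hypothesis needed. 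The telescoping of the ratios $r_k=S_k/S_{k-1}$ and the reduction of $\sqrt[k]{S_k}\geq\sqrt[k+1]{S_{k+1}}$ to $r_1\cdots r_k\geq r_{k+1}^{k}$ are likewise correct.

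One caveat about the equality clause. You only treat equality holding \emph{throughout} the chain, closing it via AM--GM at the two ends. But the paper applies the lemma (in the proof of Proposition \ref{diamonds}) to equality in a \emph{single} link, namely $S_1=\sqrt{S_2}$ for $k=2$, and concludes all $a_i$ are equal from that alone. Fortunately your ratio argument closes this stronger statement in one line: equality in the $k$-th link means $r_1\cdots r_k=r_{k+1}^{k}$ with each $r_i\geq r_{k+1}>0$, which forces $r_1=\cdots=r_{k+1}$; in particular $S_1^{2}=S_0S_2=S_2$, which unwinds to $l\sum_i a_i^{2}=\bigl(\sum_i a_i\bigr)^{2}$, and the equality case of Cauchy--Schwarz gives $a_1=\cdots=a_l$. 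You should state this explicitly, since the version of the equality condition actually consumed later in the paper is the single-link one.
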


\begin{pop1}
The first assertion follows from equality (\ref{eq01}) and the fact that all
of the principal minors of order $2$ are equal to $1$.

To prove the second and third assertions, let $m$ be the integer part of $\frac{n}{2}$ and
$\pm i\lambda_{1},\ldots,\pm i\lambda_{m}$ the nonzero eigenvalues of $S$.

As we have seen above
\[
P_{S}(x)=%
\begin{cases}
(x^{2}+\lambda_{1}^{2})(x^{2}+\lambda_{2}^{2})\cdots(x^{2}+{\lambda_{m}^{2}%
}) & \text{ if }n\text{ is even }\\
x(x^{2}+\lambda_{1}^{2})(x^{2}+\lambda_{2}^{2})\cdots(x^{2}+{\lambda_{m}^{2}%
}) & \text{if }n\text{ is odd}%
\end{cases}
\]
By developing these products and identifying the coefficients of the obtained
polynomials with those of $x^{n}+\sigma_{2}x^{n-2}+\sigma_{4}x^{n-4}%
+\cdots+\sigma_{n-2}x^{2}+\sigma_{n}$, we get $\sigma_{2}=\sum_{1\leq i\leq
m}{{\lambda_{i}}^{2}}$ and $\sigma_{4}=\sum_{1\leq i<j\leq m}{{\lambda_{i}%
}^{2}{\lambda_{j}}^{2}}$.

Now, applying Maclaurin's inequality to $\lambda_{1}^{2},\lambda_{2}%
^{2},\ldots,\lambda_{m}^{2}$, for $k=2$, we obtain
\begin{equation}
\frac{(\sum_{1\leq i<j\leq m}{{\lambda_{i}}^{2}{\lambda_{j}}^{2})}}{\binom
{m}{2}}\leq\left(  \frac{1}{m}\underset{1\leq i\leq m}{\sum}\lambda_{i}%
^{2}\right)  ^{2} \label{eq1}%
\end{equation}

It follows that $\sigma_{4}\leq\frac{m-1}{2m}\sigma_{2}^{2}$ and hence, by
assertion 1, $\sigma_{4}\leq\frac{(m-1)n^{2}(n-1)^{2}}{8m}$

We conclude that

\begin{equation}
\sigma_{4}\leq%
\begin{cases}
\frac{1}{8}n(n-1)^{2}(n-2) & \text{if }n\text{ is even}\\
\frac{1}{8}n^{2}\left(  n-1\right)  \left(  n-3\right)  & \text{if }n\text{ is
odd}%
\end{cases}
\label{eq2}%
\end{equation}

Moreover, equality holds in \ref{eq1} and hence in \ref{eq2}, if and only if
$\lambda_{1}^{2}=\lambda_{2}^{2}=\ldots=\lambda_{m}^{2}=\frac{\sigma_{2}}{2}$.

It follows that if $n$ is even, then $\sigma_{4}=\frac{1}{8}n(n-1)^{2}(n-2)$
if and only if $P_{S}(x)=(x^{2}+(n-1))^{\frac{n}{2}}$ and if $n$ is odd, then $\sigma
_{4}=\frac{1}{8}n(n-1)^{2}(n-2)$ if and only if $P_{S}(x)=x(x^{2}+n)^{\frac{n-1}{2}}$.
\end{pop1}

Theorem \ref{main1} is a direct consequence of Proposition \ref{diamonds} and
the following proposition.
\begin{proposition}
\label{lemseidel} Let $S$ be a skew-symmetric Seidel matrix of order $n$. The
following assertions hold

\begin{enumerate}
\item If $n\equiv0\pmod{4}  $, then $P_{S}(x)=(x^{2}+(n-1))^{\frac
{n}{2}}$ if and only if $S$ is a skew-conference matrix.

\item If $n\equiv3\pmod{4}  $, then $P_{S}(x)=x(x^{2}+n)^{\frac
{n-1}{2}}$ if and only if $S$ is obtained by deleting a row and column from a
skew-conference matrix of order $n+1$.
\end{enumerate}
\end{proposition}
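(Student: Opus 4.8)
The plan is to reduce each equivalence to a single quadratic identity satisfied by $S$ and then read off the spectral information from the fact that $S$ is real and skew-symmetric, hence normal and diagonalizable over $\mathbb{C}$. Recall that a skew-conference matrix $C$ of order $m$ has zero diagonal, off-diagonal entries in $\{-1,1\}$, is skew-symmetric, and satisfies $CC^{T}=(m-1)I$; since $C^{T}=-C$ this is equivalent to $C^{2}=-(m-1)I$.

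For assertion~1, I would first observe that a skew-symmetric Seidel matrix $S$ is a skew-conference matrix if and only if $S^{2}=-(n-1)I$, because $SS^{T}=-S^{2}$ while the zero-diagonal and $\pm1$ off-diagonal conditions are already built into the definition of $S$. It then remains to show that $S^{2}=-(n-1)I$ is equivalent to $P_{S}(x)=(x^{2}+(n-1))^{n/2}$. The forward direction is immediate: the eigenvalues $\mu$ of $S$ satisfy $\mu^{2}=-(n-1)$, so $\mu=\pm i\sqrt{n-1}$, and since $n$ is even these purely imaginary eigenvalues split into $n/2$ conjugate pairs. Conversely, if the characteristic polynomial has this form then $S$ is diagonalizable with minimal polynomial $x^{2}+(n-1)$, whence $S^{2}=-(n-1)I$. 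This disposes of part~1.

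For assertion~2, I would write a putative skew-conference matrix of order $n+1$ in bordered form
\[
C=\begin{pmatrix} S & b\\ -b^{T} & 0\end{pmatrix},
\]
and expand $C^{2}$ blockwise. The identity $C^{2}=-nI_{n+1}$ is then equivalent to the three conditions $S^{2}-bb^{T}=-nI_{n}$, $Sb=0$, and $b^{T}b=n$. For the ($\Leftarrow$) direction I would start from a skew-conference $C$, extract these three relations, and note that $S$ preserves the hyperplane $b^{\perp}$ (since $b^{T}Sv=-(Sb)^{T}v=0$), where it satisfies $S^{2}=-nI$, while $Sb=0$. Counting dimensions and pairing conjugate eigenvalues on the $(n-1)$-dimensional space $b^{\perp}$ yields eigenvalue $0$ with multiplicity one and $\pm i\sqrt{n}$ each with multiplicity $(n-1)/2$, i.e. $P_{S}(x)=x(x^{2}+n)^{(n-1)/2}$.

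The substantive direction is ($\Rightarrow$), and this is where I expect the only real difficulty. Given $P_{S}(x)=x(x^{2}+n)^{(n-1)/2}$, the matrix $M:=S^{2}+nI$ is positive semidefinite of rank one, its single nonzero eigenvalue $n$ sitting on the one-dimensional kernel of $S$; hence $M=bb^{T}$ for a real vector $b$ spanning $\ker S$, unique up to sign, and this $b$ automatically satisfies $Sb=0$. The crux is to verify that $b$ has entries in $\{-1,1\}$, so that the bordered matrix $C$ is genuinely a Seidel (hence skew-conference) matrix. This follows from the Seidel structure of $S$: since every off-diagonal entry is $\pm1$, a direct computation gives $(S^{2})_{ii}=-\sum_{j\neq i}S_{ij}^{2}=-(n-1)$, so $M_{ii}=1$ and therefore $b_{i}^{2}=(bb^{T})_{ii}=1$. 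With $b$ a $\pm1$ vector satisfying $Sb=0$, $b^{T}b=n$, and $S^{2}-bb^{T}=-nI$, the block expansion shows $C^{2}=-nI_{n+1}$, so $C$ is a skew-conference matrix of order $n+1$ from which $S$ is recovered by deleting the last row and the corresponding column.
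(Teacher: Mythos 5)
Your proof is correct, and it takes a genuinely different --- and more self-contained --- route than the paper's. Part 1 is essentially the same argument (the paper phrases diagonalizability via the Hermitian matrix $iS$, you via normality of $S$; either way the point is that the minimal polynomial must be $x^{2}+(n-1)$). The real divergence is in part 2. For the forward direction the paper does not construct the bordering vector at all: it invokes Lemma \ref{skewconf}, quoted from \cite{greaves17}, which asserts the existence of a $\pm1$ vector $\mathbf{v}$ making the bordered matrix have characteristic polynomial $(x^{2}+n)^{(n+1)/2}$, and then applies part 1 to that bordered matrix. Your rank-one argument --- $S^{2}+nI$ is positive semidefinite of rank one supported on $\ker S$, hence equals $bb^{T}$ with $Sb=0$, and the diagonal identity $(S^{2})_{ii}=-(n-1)$ forces $b_{i}^{2}=1$ --- is in effect a proof of that cited lemma, so your write-up eliminates the paper's one black box. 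For the converse direction the paper applies the Cauchy interlacing theorem (Theorem \ref{cauchy}) to $i\widehat{S}$ and $iS$, whereas you expand $C^{2}=-nI$ blockwise and diagonalize the restriction of $S$ to the invariant hyperplane $b^{\perp}$; both are valid, yours trading a classical theorem for an elementary invariant-subspace computation. Net effect: the paper's proof is shorter given its citations; yours is longer but completely elementary, and it produces the explicit bordering vector rather than only its existence.
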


The proof of this proposition is contained implicitly in \cite{greaves17}. It is based on the following results.

\begin{theorem}
\label{cauchy}\textbf{(Cauchy Interlace Theorem)} Let $A$ be a Hermitian
matrix of order $n$ with eigenvalues $\lambda_{1}\geq\lambda_{2}\geq\cdots
\geq\lambda_{n-1}\geq\lambda_{n}$. Let $B$ be a principal submatrix of $A$ of
order $n-1$ with eigenvlaues $\mu_{1}\geq\mu_{2}\geq\cdots\geq\mu_{n-1}$.Then
$\lambda_{1}\geq\mu_{1}\geq\lambda_{2}\geq\cdots\geq\lambda_{n-1}\geq\mu
_{n-1}\geq\lambda_{n}$.
\end{theorem}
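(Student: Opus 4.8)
The plan is to establish the full interlacing chain from the Courant--Fischer variational characterization of the eigenvalues of a Hermitian matrix. First I would put the problem in standard position: the eigenvalues of $A$ and of $B$ are unchanged under conjugation by a permutation matrix, and every principal submatrix of order $n-1$ is obtained by deleting the $i$-th row and the $i$-th column for some $i$; so after a permutation similarity I may assume $B$ arises from $A$ by deleting the last row and column. Identifying $\mathbb{C}^{n-1}$ with the subspace $S=\{y\in\mathbb{C}^{n}: y_{n}=0\}$, every $x\in\mathbb{C}^{n-1}$ corresponds to $y\in S$ with $\|y\|=\|x\|$ and $y^{*}Ay=x^{*}Bx$. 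Hence the Rayleigh quotients of $B$ are precisely the Rayleigh quotients of $A$ restricted to the hyperplane $S$.

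Next I would invoke the Courant--Fischer theorem in both its max--min and min--max forms,
\[
\lambda_{k}=\max_{\dim V=k}\ \min_{0\neq x\in V}\frac{x^{*}Ax}{x^{*}x}
=\min_{\dim V=n-k+1}\ \max_{0\neq x\in V}\frac{x^{*}Ax}{x^{*}x},
\]
together with the analogous formulas for $\mu_{k}$ taken over subspaces of $\mathbb{C}^{n-1}$. The two halves of the interlacing chain then follow from a single device: restricting the admissible subspaces to those lying inside $S$ can only shrink a maximum and enlarge a minimum. For $\mu_{k}\le\lambda_{k}$, take a $k$-dimensional $W\subseteq\mathbb{C}^{n-1}$ attaining the outer maximum for $\mu_{k}$, embed it as a $k$-dimensional $\widetilde{W}\subseteq S$, and note that $\widetilde{W}$ is one admissible competitor in the maximum defining $\lambda_{k}$, whence $\lambda_{k}\ge\min_{0\neq y\in\widetilde{W}}\tfrac{y^{*}Ay}{y^{*}y}=\mu_{k}$. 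For $\lambda_{k+1}\le\mu_{k}$, use the min--max form: take an $(n-k)$-dimensional $W\subseteq\mathbb{C}^{n-1}$ attaining $\mu_{k}$, embed it into $S$, and observe that $\widetilde{W}$ is admissible in the minimum defining $\lambda_{k+1}$, whose ambient dimension $n-(k+1)+1=n-k$ matches, so $\lambda_{k+1}\le\max_{0\neq y\in\widetilde{W}}\tfrac{y^{*}Ay}{y^{*}y}=\mu_{k}$. Concatenating $\lambda_{k+1}\le\mu_{k}\le\lambda_{k}$ for $k=1,\dots,n-1$ yields the stated chain.

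The only delicate point is the bookkeeping with subspace dimensions, so that each embedded subspace is genuinely an admissible competitor for the correct eigenvalue index; once the index shift $n-k=n-(k+1)+1$ is tracked, no estimate is needed beyond the monotonicity of $\max$ and $\min$ under restriction to a subspace. An alternative route avoiding Courant--Fischer would expand the characteristic polynomial along the last row and column to obtain $\det(xI-A)=(x-a_{nn})\det(xI-B)-b^{*}\operatorname{adj}(xI-B)\,b$ and then analyze the sign changes of the rational function $\det(xI-A)/\det(xI-B)$ between consecutive roots of the denominator. That approach, however, requires a separate treatment of repeated eigenvalues of $B$, whereas the variational argument handles multiplicities uniformly; I would therefore favour the Courant--Fischer proof.
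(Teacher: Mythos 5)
Your proof is correct and complete. Note, however, that the paper offers no proof of this statement: it records the Cauchy Interlace Theorem as a classical fact and only applies it (in the proof of Proposition \ref{lemseidel}, to the Hermitian matrix $i\widehat{S}$ and its principal submatrix $iS$), so there is no in-paper argument to compare yours against. Your Courant--Fischer route is the standard textbook proof, and the two genuinely delicate points are handled properly: the reduction via permutation similarity to the case where $B$ is the leading principal submatrix (permutation conjugation preserves the spectra of both $A$ and the deleted submatrix), and the dimension bookkeeping $(n-1)-k+1 = n-k = n-(k+1)+1$, which is exactly what makes the embedded subspace $\widetilde{W}\subseteq S$ an admissible competitor for $\lambda_{k+1}$ in the min--max characterization. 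Concatenating $\lambda_{k+1}\le\mu_k\le\lambda_k$ for $k=1,\dots,n-1$ then gives the full chain, with repeated eigenvalues handled uniformly, as you observe; your closing remark about the alternative argument via the rational function $\det(xI-A)/\det(xI-B)$ and its complications at multiple roots of the denominator is also accurate.
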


\begin{lemma}
\cite{greaves17}\label{skewconf} Let $S$ be a skew-symmetric Seidel matrix of
order $n\equiv3\pmod{4}  $ with characteristic polynomial
$P_{S}(x)=x(x^{2}+n)^{\frac{n-1}{2}}$. Then there exists a vector $\mathbf{v}$
with entries from $\left\{  -1,1\right\}  $ such that the characteristic
polynomial of the matrix $%
\begin{pmatrix}
S & \mathbf{v}\\
-\mathbf{v}^{T} & 0
\end{pmatrix}
$ is $(x^{2}+n)^{\frac{n+1}{2}}$.
\end{lemma}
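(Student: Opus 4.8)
The plan is to produce the vector $\mathbf{v}$ explicitly from the spectral structure of $S$ and then verify the claim by squaring the bordered matrix. Write
\[
M=\begin{pmatrix} S & \mathbf{v}\\ -\mathbf{v}^{T} & 0\end{pmatrix}.
\]
Since $M$ is real and skew-symmetric of even order $n+1$, its eigenvalues are purely imaginary and occur in conjugate pairs, and $M$ is normal, hence diagonalizable. Consequently $M$ has characteristic polynomial $(x^{2}+n)^{\frac{n+1}{2}}$ if and only if every eigenvalue $\nu$ satisfies $\nu^{2}=-n$, that is, if and only if $M^{2}=-nI_{n+1}$. So it suffices to exhibit $\mathbf{v}\in\{-1,1\}^{n}$ with $M^{2}=-nI_{n+1}$.

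Next I would reduce this matrix identity to a single equation. Block multiplication gives
\[
M^{2}=\begin{pmatrix} S^{2}-\mathbf{v}\mathbf{v}^{T} & S\mathbf{v}\\ -\mathbf{v}^{T}S & -\mathbf{v}^{T}\mathbf{v}\end{pmatrix}.
\]
For a $\pm1$ vector the bottom-right block equals $-\mathbf{v}^{T}\mathbf{v}=-n$ automatically, and the two off-diagonal blocks vanish precisely when $S\mathbf{v}=0$ (they are transposes of one another because $S^{T}=-S$). Thus $M^{2}=-nI_{n+1}$ is equivalent to the two conditions $S\mathbf{v}=0$ and $\mathbf{v}\mathbf{v}^{T}=S^{2}+nI_{n}$, and I will show that a single well-chosen $\mathbf{v}$ meets both, the second condition in fact forcing the first.

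The core of the argument is a spectral analysis of $S^{2}+nI_{n}$. The hypothesis $P_{S}(x)=x(x^{2}+n)^{\frac{n-1}{2}}$ says the eigenvalues of $S$ are $0$ (simple) together with $\pm i\sqrt{n}$; since $S$ is normal, $\dim\ker S=1$. Squaring, $S^{2}$ is symmetric with eigenvalue $0$ of multiplicity one and $-n$ of multiplicity $n-1$, so $S^{2}+nI_{n}$ is symmetric, positive semidefinite, of rank one, with image equal to $\ker S^{2}=\ker S$ (the inclusion $\ker S\subseteq\ker S^{2}$ being an equality here since both spaces are one-dimensional). Hence $S^{2}+nI_{n}=\mathbf{w}\mathbf{w}^{T}$ for a real vector $\mathbf{w}$ spanning $\ker S$, unique up to sign. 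The decisive computation is now on the diagonal: since $S$ is a skew-symmetric $\pm1$ matrix with zero diagonal, $(S^{2})_{ii}=\sum_{k}S_{ik}S_{ki}=-\sum_{k\neq i}S_{ik}^{2}=-(n-1)$, so every diagonal entry of $S^{2}+nI_{n}$ equals $1$. Comparing with $(\mathbf{w}\mathbf{w}^{T})_{ii}=w_{i}^{2}$ forces $w_{i}^{2}=1$, i.e.\ $\mathbf{w}\in\{-1,1\}^{n}$. Setting $\mathbf{v}=\mathbf{w}$ then gives $\mathbf{v}\mathbf{v}^{T}=S^{2}+nI_{n}$, and since $\mathbf{v}$ lies in the image $\ker S$ we obtain $S\mathbf{v}=0$ at no extra cost. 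Both conditions hold, so $M^{2}=-nI_{n+1}$ and the characteristic polynomial of $M$ is $(x^{2}+n)^{\frac{n+1}{2}}$.

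The step I expect to carry the real weight is the last one: recognizing that the eigenvalue data force $S^{2}+nI_{n}$ to be rank-one and positive semidefinite, and then reading off its $\pm1$ factor from the constant diagonal $(S^{2})_{ii}=-(n-1)$. Everything else — the equivalence with $M^{2}=-nI_{n+1}$, the block multiplication, and the automatic vanishing of $S\mathbf{v}$ once $\mathbf{v}$ is pinned to $\ker S$ — is bookkeeping. The only point requiring care is $\dim\ker S=\dim\ker S^{2}=1$, which guarantees that the rank-one factor is unique up to sign and genuinely spans $\ker S$; this in turn relies on the normality of $S$, ensuring that algebraic and geometric multiplicities coincide.
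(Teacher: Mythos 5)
Your proof is correct, and it is worth noting that the paper itself never proves this lemma at all --- it is quoted verbatim from \cite{greaves17}, and the surrounding text only says the proof of Proposition \ref{lemseidel} is ``contained implicitly'' in that reference. So your argument functions as a self-contained substitute for the citation, and every step of it checks out: the equivalence ``char.\ poly.\ $(x^{2}+n)^{\frac{n+1}{2}}$ $\Longleftrightarrow$ $M^{2}=-nI_{n+1}$'' is legitimate because $M$ is real skew-symmetric, hence normal and diagonalizable, so its minimal polynomial divides $x^{2}+n$ exactly when all eigenvalues square to $-n$; the block computation correctly reduces matters to $\mathbf{v}\mathbf{v}^{T}=S^{2}+nI_{n}$ and $S\mathbf{v}=0$; the hypothesis on $P_{S}$ together with normality of $S$ gives $\dim\ker S=\dim\ker S^{2}=1$, so $S^{2}+nI_{n}$ is symmetric positive semidefinite of rank one with image $\ker S$, hence equal to $\mathbf{w}\mathbf{w}^{T}$ with $\mathbf{w}$ spanning $\ker S$; and the diagonal identity $(S^{2})_{ii}=-(n-1)$, valid for any skew-symmetric Seidel matrix, pins $w_{i}^{2}=1$, so $\mathbf{w}\in\{-1,1\}^{n}$ and $S\mathbf{w}=0$ comes for free. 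This rank-one factorization of $S^{2}+nI_{n}$ with the constant diagonal forcing $\pm1$ entries is in fact the natural (and essentially the standard) route to this statement, so you have not merely matched the cited source --- you have supplied the proof the paper chose to omit. One small remark: your argument never uses the assumption $n\equiv3\pmod{4}$ beyond what the stated characteristic polynomial already implies ($n$ odd); that congruence is a necessary condition for such an $S$ to exist, not an ingredient of the extension argument, so its absence from your proof is not a gap.
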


\begin{pop2} 
Observe that $iS$ is a Hermitian matrix and
so is diagonalizable with real eigenvalues.

Assume that $n\equiv0\pmod{4}  $ and $P_{S}(x)=(x^{2}+(n-1))^{\frac
{n}{2}}$. Then $P_{iS}(x)=(x^{2}-(n-1))^{\frac{n}{2}}$. It follows that the
minimal polynomial of $iS$ is $x^{2}-(n-1)$ and hence $S^{2}+(n-1)I_{n}=0$,
where $I_{n}$ is the $n\times n$ identity matrix. Thus, $S$ is a
skew-symmetric conference matrix. The converse is trivial.

To prove the second assertion, assume that $n\equiv3\pmod{4}  $ and
$P_{S}(x)=x(x^{2}+n)^{\frac{n-1}{2}}$. By Lemma \ref{skewconf}, there exists a
vector $\mathbf{v}$ with entries from $\left\{  -1,1\right\}  $, such that the
characteristic polynomial of the matrix $\widehat{S}=%
\begin{pmatrix}
S & \mathbf{v}\\
-\mathbf{v}^{T} & 0
\end{pmatrix}
$ is $(x^{2}+n)^{\frac{n+1}{2}}$. By the first assertion, $\widehat{S}$ is a
skew-conference matrix. Conversely, suppose that $S$ is obtained by deleting a
row and the corresponding column from a skew-conference matrix $\widehat{S}$
of order $n+1$. The eigenvalues of $i\widehat{S}$ are $\lambda_{1}=\lambda
_{2}=\cdots=\lambda_{(n+1)/2}=\sqrt{n}$, and $\lambda_{(n+3)/2}=\cdots=\lambda
_{n+1}=-\sqrt{n}$. By Theorem \ref{cauchy} and since $0$ is an eigenvalue of
$iS$, the eigenvalues of $iS$ are $\mu_{1}=\mu_{2}=\cdots=\mu_{(n-1)/2}%
=\sqrt{n}$, $\mu_{(n+1)/2}=0$, $\mu_{(n+3)/2}=\cdots=\mu_{n}=-\sqrt{n}$.
It follows that $P_{S}(x)=x(x^{2}+n)^{\frac{n-1}{2}}$.
\end{pop2}

\section{Partial solution to Frankl-F\"uredi problem}

In their work \cite{semeraro17}, Gunderson and Semeraro obtained the maximum
number possible of hyperedges in an $r$-uniform hypergraph $\mathcal{H}$ of order $n$, with
the property that every set of $r+1$ vertices contains at most $2$ hyperedges.
More precisely, for such hypergraph, they proved that $e(\mathcal{H}%
)\leq\frac{n}{r^{2}}\binom{n}{r-1}$, with equality if and only if 
every set of $(r-1)$ vertices occurs in exactly $n/r$ hyperedges. Remark
that Proposition \ref{sem1} corresponds to the case $r=4$.

In order to prove our second main result, we need the following combinatorial
lemma. Its proof is similar to that of \cite[Lemma~1]{reid89}.

\begin{lemma}
\label{min} Suppose that $s$ and $p$ are positive integers. Write $s=pk+h$,
for some integers $k$ and $h$, $0\leq h<p$.

Let $\Gamma =\left\{  (x_{1},\ldots,x_{p})\ :x_{1}\leq\cdots\leq x_{p}%
,\ x_{i}\in\mathbb{N},\ 1\leq i\leq p,\ \overset{p}{\underset{i=1}{\sum}}%
x_{i}=s\right\}  $ \newline Then
\[
\min\left\{  \underset{i=1}{\overset{p}{\sum}}x_{i}^{2}\ :(x_{1},\ldots
,x_{p})\in\Gamma\right\}  =h(k+1)^{2}+(p-h)k^{2}%
\]
Moreover, for $(x_{1},\ldots,x_{p})\in\Gamma$, we have $\underset
{i=1}{\overset{p}{\sum}}x_{i}^{2}=h(k+1)^{2}+(p-h)k^{2}$ if and only $x_{i}%
\in\left\{  k,k+1\right\}  $ for $i=1,\ldots,p$.
\end{lemma}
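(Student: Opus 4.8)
The plan is to run a discrete smoothing (exchange) argument, in the spirit of \cite[Lemma~1]{reid89}: I will show that the sum of squares can only decrease when the coordinates are made more balanced, so that the minimum is forced onto the unique ``flat'' tuple whose entries lie in $\{k,k+1\}$. This is just the integer incarnation of the convexity of $\sum x_i^2$ under a fixed linear constraint.

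First I would record the elementary identity that drives everything. For any two coordinates with $x_i\le x_j$, replacing the pair $(x_i,x_j)$ by $(x_i+1,x_j-1)$ leaves the total sum $s$ unchanged, while
\[
(x_i+1)^2+(x_j-1)^2-x_i^2-x_j^2=2\,(x_i-x_j+1).
\]
Hence, whenever $x_j-x_i\ge 2$, this move \emph{strictly} decreases $\sum_{i=1}^{p}x_i^2$; and since $x_j-1\ge x_i+1$, the decremented entry is still at least as large as $x_i$, so the new tuple has entries in $\mathbb{N}$ and lies in $\Gamma$ after reordering. From this I would conclude that no minimizer can contain two entries differing by $2$ or more, because the exchange above would otherwise produce a strictly smaller value.

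Next I would extract the structure of a minimizer: all its entries lie in a single pair $\{m,m+1\}$ of consecutive integers. Writing $b$ for the number of entries equal to $m+1$, the constraint $\sum x_i=s$ gives $pm+b=pk+h$ with $0\le b\le p$. Since $0\le h<p$, this pins down $m=k$ and $b=h$ (the degenerate reading $m=k-1$, $b=p$ occurs only when $h=0$ and describes the same all-$k$ tuple). Substituting back yields
\[
\sum_{i=1}^{p}x_i^2=h(k+1)^2+(p-h)k^2,
\]
which is therefore the minimum value.

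Finally, for the ``if and only if'' I would invoke that the smoothing step is \emph{strictly} decreasing: any tuple containing a gap of size at least $2$ lies strictly above the minimum, so the minimum is attained precisely by the tuples all of whose entries lie in $\{k,k+1\}$, which by the counting above are exactly those with $h$ entries equal to $k+1$ and $p-h$ equal to $k$. The main (and rather modest) obstacle is the uniqueness direction: I must check that the exchange move never leaves $\Gamma$ — feasibility of the decremented entry within $\mathbb{N}$ — and that the forced flatness really yields a single admissible tuple rather than merely ``some'' consecutive pair. The inequality $0\le h<p$ is exactly what rules out the other consecutive pairs and makes the minimizing configuration unique.
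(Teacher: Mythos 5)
Your proof is correct, and it follows essentially the same route as the paper, which gives no independent argument but defers to the exchange/smoothing proof of \cite[Lemma~1]{reid89} — precisely the balancing move $(x_i,x_j)\mapsto(x_i+1,x_j-1)$ with strict decrease $2(x_i-x_j+1)<0$ when $x_j-x_i\ge 2$ that you use. Your case analysis pinning down $m=k$, $b=h$ via $0\le h<p$ (with the degenerate $b=p$, $h=0$ reading) correctly settles both the value of the minimum and the characterization of the minimizers.
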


\begin{pop3}

Let $C_{1},\ldots,C_{\binom{n}{3}}$ be the family of all $3$-sets of vertices
and for each $i\leq\binom{n}{3}$, let $a_{i}$ be the number of hyperedges of
$\mathcal{H}$ containing $C_{i}$. By double counting, we have $\underset
{i=1}{\overset{\binom{n}{3}}{\sum}}a_{i}=4e(\mathcal{H})$.

Let us write $\,4e(\mathcal{H})=\binom{n}{3}k+h\,$ where $k$ and $h$ are integers and
$0\leq h<\binom{n}{3}$.

By Proposition \ref{sem1}, we have $\,e(\mathcal{H})\leq\frac{1}{96}%
n^{2}(n-1)(n-2)$.

Then 
\begin{eqnarray*}
k & \leq & \frac{4e(\mathcal{H})}{\binom{n}{3}} \\
  & \leq &\frac{\tfrac{4}{96}n^{2}(n-1)(n-2)}{\binom{n}{3}}\\
    & = & \frac{1}{4}n\,=\ t+\frac{3}{4}
\end{eqnarray*}

Hence $\,k\leq t$.

If $\,k\leq t-1\,$ then $\, 4e(\mathcal{H})\leq(\tfrac{n-3}{4}-1)\tbinom{n}{3}
+\tbinom{n}{3}$.

It follows that 
\begin{eqnarray*}
e(\mathcal{H}) & \leq & \tfrac{1}{96}n\left(  n-3\right)  \left(n-1\right)  \left(  n-2\right) \\
               &\leq & \tfrac{1}{96}n\left(  n-1\right)  \left(n-3\right)  (n+1)
\end{eqnarray*}

Assume now that $k=t$, then $h=4e(\mathcal{H})-\tbinom{n}{3}t$. Hence, by
applying Lemma \ref{min}, we get
\[
\underset{i=1}{\overset{\binom{n}{3}}{\sum}}a_{i}^{2}\geq h\left(  t+1\right)
^{2}+\left(  \binom{n}{3}-h\right)  t^{2}\nonumber
\]

By substitution, we get
\begin{equation}
\underset{i=1}{\overset{\binom{n}{3}}{\sum}}a_{i}^{2}\geq\left(  2n-2\right)
e(\mathcal{H})\allowbreak-\frac{1}{96}n\left(  n-1\right)  \left(  n-2\right)
\left(  n-3\right)  \left(  n+1\right)  \label{eqyy}%
\end{equation}

We consider the set
\[
\mathcal{F=}\left\{  (A,B)\mid\left\vert A\right\vert =\left\vert B\right\vert
=4\text{, }\left\vert A\cap B\right\vert =3\text{, }A\in\mathcal{H}\text{ and
}B\notin\mathcal{H}\right\}
\]

This set is a disjoint union of the following sets
\[
\mathcal{F}_{i}=\left\{  (A,B)\mid\left\vert A\right\vert =\left\vert
B\right\vert =4\text{, }A\cap B=C_{i}\text{, }A\in\mathcal{H}\text{ and
}B\notin\mathcal{H}\right\}
\]

Using the proof of in \cite[Proposition~14]{semeraro17}, we get
\[
\left\vert \mathcal{F}\right\vert \geq3(n-4)e(\mathcal{H})
\]

and%
\[
\underset{i=1}{\overset{\binom{n}{3}}{\sum}}\left\vert \mathcal{F}%
_{i}\right\vert =4(n-3)e(\mathcal{H)-}\underset{i=1}{\overset{\binom{n}{3}%
}{\sum}}a_{i}^{2}%
\]

It follows that
\[
3(n-4)e(\mathcal{H})\leq4(n-3)e(\mathcal{H)-}\underset{i=1}{\overset{\binom
{n}{3}}{\sum}}a_{i}^{2}%
\]

Hence, by inequality \ref{eqyy}, we get
\begin{align*}
3(n-4)e(\mathcal{H})  &  \leq4(n-3)e(\mathcal{H)-}\underset{i=1}%
{\overset{\binom{n}{3}}{\sum}}a_{i}^{2}\\
&  \leq\left(  2n-10\right)  e(\mathcal{H})+\frac{1}{96}n\left(  n-1\right)
\left(  n-2\right)  \left(  n-3\right)  \left(  n+1\right)
\end{align*}

Then%
\[
e(\mathcal{H})\leq\frac{1}{96}n\left(  n-1\right)  \left(  n-3\right)  \left(
n+1\right)
\]

\end{pop3}

\section{Concluding remarks}

For an integer $t\geq1$, a $t-(n,k,\lambda)$ \emph{design} is an ordered pair
$\mathcal{D=}(V,\mathcal{B})$ where $V$ is a set of size $n$ and $\mathcal{B}$
is a collection of $k$-subsets of $V$ called \emph{blocks}, such that every
$t$-subset of $V$ is contained in exactly $\lambda$ blocks.

Recall the following well-known result about designs.

\begin{theorem}
\label{basic design}Let $\mathcal{D}=(V,\mathcal{B})$ be a $t-(n,k,\lambda)$
design. Then

\begin{enumerate}
\item $\left\vert \mathcal{B}\right\vert =\lambda\dfrac{\binom{n}{t}}%
{\binom{k}{t}}$.


\item For $s=1,\cdots,t$, every $s$-subset of $V$ is contained in exactly $\lambda\dfrac
{\binom{n-s}{t-s}}{\binom{k-s}{t-s}}$ blocks.
\end{enumerate}
\end{theorem}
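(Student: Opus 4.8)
The plan is to prove both assertions by a single double-counting argument, with the more general second assertion subsuming the first. For a fixed integer $s$ with $0\le s\le t$ and a fixed $s$-subset $S\subseteq V$, I would count in two ways the number of pairs $(T,B)$ satisfying $S\subseteq T\subseteq B$, where $|T|=t$ and $B\in\mathcal{B}$. Here $T$ ranges over $t$-subsets of $V$ and $B$ over blocks, so that $T$ plays the role of an intermediate $t$-set sandwiched between the chosen $S$ and a block.

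Counting first by the intermediate set $T$: a $t$-subset $T$ with $S\subseteq T$ is obtained by adjoining $t-s$ of the $n-s$ points of $V\setminus S$, giving $\binom{n-s}{t-s}$ choices, and by the defining property of a $t$-design each such $T$ lies in exactly $\lambda$ blocks, so the number of pairs equals $\lambda\binom{n-s}{t-s}$. Counting instead by the block $B$: writing $\lambda_s$ for the number of blocks containing $S$, each such block is a $k$-set, and the admissible $t$-sets $T$ with $S\subseteq T\subseteq B$ are obtained by adjoining $t-s$ of the $k-s$ points of $B\setminus S$, giving $\binom{k-s}{t-s}$ choices, so the number of pairs equals $\lambda_s\binom{k-s}{t-s}$. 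Equating the two counts gives
\[
\lambda_s=\lambda\,\frac{\binom{n-s}{t-s}}{\binom{k-s}{t-s}},
\]
which is assertion 2. Assertion 1 is then the case $s=0$: the empty set is contained in every block, so $\lambda_0=|\mathcal{B}|$, yielding $|\mathcal{B}|=\lambda\binom{n}{t}/\binom{k}{t}$.

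Since the argument is an elementary incidence count, I do not expect a genuine obstacle; the one point deserving care is that $\lambda_s$ is a priori defined only relative to the chosen $s$-set $S$, and it is precisely the displayed identity — whose right-hand side is manifestly independent of $S$ — that certifies this number is the same for every $s$-subset, so that $\mathcal{D}$ is automatically also an $s$-design for each $s\le t$. An alternative route would be to prove assertion 1 first and then derive assertion 2 by induction on $t-s$, replacing $V$ by the points outside $S$ and restricting to blocks through $S$; but the uniform pair-counting above is cleaner and dispatches all values of $s$ at once, so I would present it in that form.
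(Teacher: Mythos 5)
Your double count of pairs $(T,B)$ with $S\subseteq T\subseteq B$ is correct, and the observation that assertion 1 is the $s=0$ case of assertion 2 is sound. Note that the paper itself offers no proof of this theorem --- it is stated as a recalled, well-known fact of design theory --- and your argument is exactly the standard textbook proof one would supply, so there is nothing to reconcile: the identity $\lambda_s\binom{k-s}{t-s}=\lambda\binom{n-s}{t-s}$ follows precisely as you derive it, and your remark that the $S$-independence of $\lambda_s$ is a conclusion of the identity rather than a hypothesis is the right point of care.
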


Semeraro and Gunderson 
\cite{semeraro17} raised the following question.  
For which natural numbers $n\equiv0\pmod{4}$ does there exist a $3-(n,4,\frac{n}{4})$ design 
with the property that every set of $5$ vertices contains either $0$ or $2$ hyperedges? 
We call such design an \emph{$FF_4$-design}. Following the definition of a design, the equality in Proposition \ref{sem1}
holds if and only if $\mathcal{H}$ is an $FF_4$-design.

\begin{remark}
    By Theorem \ref{main1}, an $FF_4$-design of order $n\equiv0\pmod{4}$ can be 
    obtained via Baber's construction from a skew-conference matrix of the same order. 
\end{remark}

Let $n\equiv3\pmod{4}  $ and assume that there is an $FF_{4}
$-design $\mathcal{H}$ with $n+1$ vertices. Let $x$ be an
arbitrary vertex of $\mathcal{H}$. By using assertion 2 of Theorem
\ref{basic design}, for $s=1$, it is easy to see that the $FF_{4}$-hypergraph
$\mathcal{H}-\left\{  x\right\}  $ has $n$ vertices and $\frac{1}{96}n\left(
n-3\right)  \left(  n-1\right)  \left(  n+1\right)$ hyperedges. 
Then, the bound in Proposition \ref{main2} is reached. The upper bound in
other cases seems to be difficult to find.

\begin{remark}
 By applying the second assertion of Theorem \ref{basic design}, and using
 the inclusion-exclusion principle, we obtain the following
\begin{enumerate}
    \item Let $n\equiv 2\pmod{4}$, by removing two vertices from an $FF_{4}$-design of order $n+2$, 
    we obtain an $FF_4$-hypergraph with $n$ vertices and $\frac
{1}{96}n(n-3)(n+2)(n-2)  $ hyperedges.
    \item  Let $n\equiv1\pmod{4}$, by removing two vertices from an $FF_{4}$-design of order $n+3$, 
    we obtain an $FF_4$-hypergraph with $n$ vertices and $\frac
{1}{96}(n-1)(n-2)(n-3)(n+3)$ hyperedges.
\end{enumerate}  
\end{remark}

Based on the previous remark, we may state the following conjecture.

\begin{conjecture}
Let $\mathcal{H}$ be an $FF_{4}$-hypergraph with $n$ vertices
\begin{enumerate}
\item if $n\equiv2\pmod{4}  $ then $\mathcal{H}$ has at most $\frac
{1}{96}n\left(  n-3\right)  \left(  n+2\right)  \left(  n-2\right)  $ hyperedges.
\item if $n\equiv1\pmod{4}  $ then $\mathcal{H}$ has at most $\frac
{1}{96}\left(  n-1\right)  \left(  n-2\right)  \left(  n-3\right)  \left(
n+3\right)  $ hyperedges.
\end{enumerate}
\end{conjecture}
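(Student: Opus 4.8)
The plan is to reduce both statements to the already-proven case $n\equiv0\pmod4$ of Proposition~\ref{sem1} by vertex deletion, and then to sharpen the outcome. Deletion preserves the $FF_4$-property, since any $5$-set of a sub-hypergraph is a $5$-set of $\mathcal{H}$ and therefore still contains $0$ or $2$ hyperedges. For $n\equiv1\pmod4$ I would delete a single vertex: a hyperedge survives in $\mathcal{H}-x$ for exactly $n-4$ choices of $x$, so $\sum_{x}e(\mathcal{H}-x)=(n-4)e(\mathcal{H})$ with each $\mathcal{H}-x$ on $n-1\equiv0\pmod4$ vertices, and Proposition~\ref{sem1} applied termwise yields
\[
(n-4)\,e(\mathcal{H})\le \frac{n(n-1)^2(n-2)(n-3)}{96}.
\]
For $n\equiv2\pmod4$ I would delete two vertices: each hyperedge survives in $\mathcal{H}-\{x,y\}$ for $\binom{n-4}{2}$ pairs, so $\sum_{\{x,y\}}e(\mathcal{H}-\{x,y\})=\binom{n-4}{2}e(\mathcal{H})$, each term on $n-2\equiv0\pmod4$ vertices, and Proposition~\ref{sem1} gives $\binom{n-4}{2}e(\mathcal{H})\le\binom{n}{2}\tfrac{(n-2)^2(n-3)(n-4)}{96}$.

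Each inequality overshoots the conjectured value by a term of order $n^2$ against a main term of order $n^4$; explicitly, the first gives $e(\mathcal{H})\le\frac{n(n-1)^2(n-2)(n-3)}{96(n-4)}$, which exceeds $\frac1{96}(n-1)(n-2)(n-3)(n+3)$ by exactly $\frac{12}{n-4}\cdot\frac{(n-1)(n-2)(n-3)}{96}$, and the second overshoots by the analogous $\frac{12}{n-5}\cdot\frac{n(n-2)(n-3)}{96}$. The source of the slack is transparent: equality in Proposition~\ref{sem1} for a term $\mathcal{H}-x$ forces $\mathcal{H}-x$ to be an $FF_4$-design, in which every $3$-set of $V\setminus\{x\}$ lies in exactly $\tfrac{n-1}{4}$ hyperedges, whereas the extremal configurations of the Remark (restrictions of a design on $n+3$, resp.\ $n+2$, vertices) lose this property after one further deletion. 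So the remaining task is to bound below the total \emph{design deficit} $\sum_x\bigl(\tfrac{(n-1)^2(n-2)(n-3)}{96}-e(\mathcal{H}-x)\bigr)$ and show it is at least the computed gap.

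The hard part will be this deficit estimate. Following the proof of Proposition~\ref{sem1}, each deficit is a positive multiple of the variance of the $3$-set degrees of $\mathcal{H}-x$ about their mean $\tfrac{n-1}{4}$, and summing over $x$ should express the total deficit through the global degree statistics of $\mathcal{H}$, namely $\sum_i a_i=4e(\mathcal{H})$ together with the exact identity $\sum_i a_i^2=n\,e(\mathcal{H})$ forced by the $FF_4$-property (writing $a_i$ for the number of hyperedges on the $i$-th $3$-set, and using that each hyperedge $A$ with $y\notin A$ determines a unique second hyperedge in $A\cup\{y\}$, which is exactly the identity implicit in the $\mathcal{F}$-count of Proposition~\ref{main2}). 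The real obstacle is that these two moments do not by themselves pin the bound down: for $n\equiv3\pmod4$ the extremal degree sequence takes only the two values $\{t,t+1\}$ and hence minimizes $\sum_i a_i^2$ for its mean, so Lemma~\ref{min} is tight, whereas for $n\equiv2$ (resp.\ $n\equiv1$) the extremal sequence spreads over three (resp.\ four) consecutive values and therefore does \emph{not} minimize $\sum_i a_i^2$; consequently the two-moment bound strictly exceeds the target and a third, structural piece of information is needed to certify how many $3$-sets sit at each degree level. I would attack this either by a codegree computation that controls the redistribution of degrees under deletion, or by showing directly that any near-extremal $\mathcal{H}$ must arise by deleting vertices from an $FF_4$-design, at which point the exact closed form can be read off from Theorem~\ref{basic design}. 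Matching the clean polynomial — rather than an estimate off by a lower-order term — is where I expect the main difficulty to lie, and it is presumably why the statement is posed as a conjecture rather than proved.
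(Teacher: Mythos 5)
You should be clear at the outset that this statement is posed in the paper as a \emph{conjecture}: the paper contains no proof of it, only the constructions in the preceding Remark (deleting vertices from $FF_4$-designs) showing that the stated bounds are attained, hence sharp if true. So there is no hidden argument you failed to reproduce, and your proposal --- which explicitly stops short of a proof --- is an honest assessment rather than a failed one. Your partial results check out. The deletion averages are right: $\sum_x e(\mathcal{H}-x)=(n-4)e(\mathcal{H})$ and $\sum_{\{x,y\}}e(\mathcal{H}-\{x,y\})=\binom{n-4}{2}e(\mathcal{H})$, and applying Proposition~\ref{sem1} termwise gives exactly the bounds you state; I verified the overshoots, e.g.\ $\frac{n(n-1)}{n-4}-(n+3)=\frac{12}{n-4}$ and $\frac{(n-1)(n-2)}{n-5}-(n+2)=\frac{12}{n-5}$. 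Your identity $\sum_i a_i^2=n\,e(\mathcal{H})$ is also correct and exact: for each hyperedge $A$ and vertex $y\notin A$, the $5$-set $A\cup\{y\}$ contains exactly one further hyperedge $B$, necessarily with $|A\cap B|=3$, so $\sum_i a_i(a_i-1)=(n-4)e(\mathcal{H})$; this is the equality version of the $\mathcal{F}$-count in the proof of Proposition~\ref{main2}.

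The genuine gap is precisely the ``deficit estimate'' you defer, and your diagnosis of why it cannot be closed by the paper's own machinery is correct: the method of Proposition~\ref{main2} combines the integrality lower bound of Lemma~\ref{min} on $\sum_i a_i^2$ with the upper bound $\sum_i a_i^2\leq n\,e(\mathcal{H})$, and this is tight only when the extremal degree sequence is two-valued, as it is for $n\equiv3\pmod{4}$. For $n\equiv2\pmod{4}$ (resp.\ $n\equiv1\pmod{4}$) the conjectured extremal hypergraphs have $3$-set degrees spread over three (resp.\ four) consecutive values, so the Lemma~\ref{min} bound is strictly slack there, and a two-valued sequence consistent with both moments would permit a strictly larger $e(\mathcal{H})$; hence first and second moments genuinely do not suffice, and the missing third (structural or codegree) constraint is exactly what neither you nor the paper supplies. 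One small correction to your reading of the source: for $n\equiv1\pmod{4}$ the Remark says ``removing two vertices from an $FF_4$-design of order $n+3$,'' which is evidently a slip for \emph{three} vertices (otherwise the count of vertices is $n+1$, not $n$); your ``four consecutive values'' tacitly assumes the corrected version, which is the right one.
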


\bibliography{bibpaper}
\nocite{*}

\end{document}